\documentclass[ECP,preprint]{ejpecp}
\usepackage[T1]{fontenc}
\usepackage{tikz}
\usetikzlibrary{arrows.meta,positioning,calc}
\definecolor{SKBlue}{HTML}{285674}
\definecolor{SKGreen}{HTML}{396551}
\hypersetup{hidelinks,
  pdftitle={Disorder transfer for the critical SK overlap law},
  pdfauthor={Yan Ru Pei}}

\newcommand{\E}{\mathbb E}
\newcommand{\R}{\mathbb R}
\newcommand{\Law}{\operatorname{Law}}
\newcommand{\calP}{\mathcal P}
\newcommand{\W}{W_2}
\newcommand{\WW}{\mathcal W_2}
\newcommand{\av}[1]{\left\langle #1\right\rangle}
\newcommand{\norm}[1]{\left\lVert #1\right\rVert}

\SHORTTITLE{Disorder transfer for the critical SK overlap law}
\TITLE{Disorder transfer for the critical SK overlap law}
\AUTHORS{Yan Ru Pei\footnote{\EMAIL{yanrpei@gmail.com}}}
\KEYWORDS{Sherrington--Kirkpatrick model; overlap; disorder universality;
  critical temperature; Wasserstein convergence}
\AMSSUBJ{82B44}
\AMSSUBJSECONDARY{60K35}

\ABSTRACT{%
We give a quantitative disorder comparison for the zero-field
Sherrington--Kirkpatrick model at inverse temperature one. For any fixed
number of replicas, replacing Gaussian couplings by independent couplings
from a bounded symmetric variance-one law changes a bounded replica expectation
by at most a constant times the Gaussian overlap second moment plus
$N^{-1}$. A multiplicative comparison for nonnegative observables controls
the tails. The proof uses Talagrand's positive-observable estimates and
Yu-Ting Chen's fourth-order replica cancellation. Assuming the Gaussian
critical overlap limit stated by Du and Huang, we obtain the same limit
for the random quenched overlap measure in the iterated Wasserstein-2
topology. No separate moment assumption is needed for this transfer.
Their additional exponential moment bound gives convergence of the
rescaled spin-glass susceptibility in every finite Wasserstein distance.
The coupling class includes fair signs, uniform disorder, and signs
thinned at any fixed positive density.}

\begin{document}

\section{The comparison and its critical application}

At the critical temperature of the Sherrington--Kirkpatrick (SK) model, the
overlap of two independent Gibbs samples fluctuates on a scale that vanishes
with the system size. Du and Huang~\cite{DHOverlap} state that, for Gaussian
couplings, the quenched law of $N^{1/3}R_{12}$ converges to an explicit random
probability measure. This limit, built from the reflected Airy$_1$ point
process and a pinned Gaussian construction, describes the overlap
distribution within a disorder sample. We show that their Gaussian result
transfers to independent fair signs and, more generally, to bounded
symmetric couplings of variance one.

The comparison follows from existing Gibbs-measure techniques. Talagrand's
local estimates~\cite[Lemmas~2.6--2.7]{Talagrand} retain the expectation of
a nonnegative observable when bounding its coupling derivatives.
Yu-Ting Chen~\cite[Eq.~(3.19), Proposition~4.2 and Example~4.3]{Chen}
reduces the summed fourth-order derivative to squared two- and four-replica
overlaps. We combine these ingredients in a mixture interpolation and
record their quantitative consequence. Bounded tests identify the limiting
random measure; positive tests control the error from clipping its tails.
The critical conclusion is conditional on the Gaussian random-measure
convergence stated below.

General comparisons for Gibbs observables were developed by Auffinger and
Wei-Kuo Chen~\cite[author's preprint, Theorem~3]{AC}. Their direct bound
under matching of the first three moments has order one at the SK
normalization. The small error here uses the overlap cancellation in
Yu-Ting Chen's argument, which already permits $N$-dependent bounded
replica tests. Critical free-energy universality is treated by Cheng,
Liu, Shao and Xu~\cite[Theorem~2.4]{CLSX}; our application concerns the
random quenched overlap measure.

For $N\ge2$, let $\Sigma_N=\{-1,1\}^N$ and set
\begin{equation}\label{eq:model}
 H_N^J(\sigma)=\frac1{\sqrt N}\sum_{1\le i<j\le N}
 J_{ij}\sigma_i\sigma_j,
 \qquad G_N^J(\sigma)=\frac{e^{H_N^J(\sigma)}}
 {\sum_{\tau\in\Sigma_N}e^{H_N^J(\tau)}}.
\end{equation}
Thus the inverse temperature is one and the external field is zero.
Angle brackets denote expectation over independent replicas
$\sigma^1,\sigma^2,\ldots$ sampled from $G_N^J$, conditional on $J$.
Write $R_{\ell\ell'}=N^{-1}\sum_{i=1}^N\sigma_i^\ell\sigma_i^{\ell'}$.
Let $\nu$ be a symmetric probability law supported on $[-b,b]$, where
$b\ge1$, with $\int x^2\,\nu(dx)=1$. The notation $\E_G$ refers to
independent standard Gaussian couplings and $\E_\nu$ to independent
couplings with common law $\nu$.

\begin{theorem}[Finite-volume comparison]\label{thm:comparison}
For every integer $r\ge1$ and $b\ge1$ there are finite constants
$C_{r,b}$ and $L_{r,b}$ such that, for every law $\nu$ as above,
every $N\ge2$ and every deterministic observable $A:\Sigma_N^r\to\R$,
\begin{equation}\label{eq:comparison}
 \left|\E_\nu\av{A}-\E_G\av{A}\right|
 \le C_{r,b}\norm{A}_\infty
 \left(\E_G\av{R_{12}^2}+\frac1N\right).
\end{equation}
For every nonnegative $A:\Sigma_N^r\to[0,\infty)$, we also have
\begin{equation}\label{eq:positive}
 e^{-L_{r,b}}\E_G\av{A}\le\E_\nu\av{A}
 \le e^{L_{r,b}}\E_G\av{A}.
\end{equation}
The observable may depend on $N$, and its bound need not be uniform in $N$.
\end{theorem}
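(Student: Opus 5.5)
We will use a mixture interpolation in which the couplings are switched from Gaussian to $\nu$ gradually. For $t\in[0,1]$, let the couplings $J_{ij}(t)$ be independent. Each equals $\sqrt t\,g_{ij}+\sqrt{1-t}\,x_{ij}$ (a Lindeberg-type smart path), where $g_{ij}$ is standard Gaussian and $x_{ij}\sim\nu$. Set $\phi(t)=\E\av{A}_t$, so that $\phi(1)=\E_G\av{A}$ and $\phi(0)=\E_\nu\av{A}$.

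Differentiating in $t$ gives, for each pair $(i,j)$, a term of the form
\[
\E\Bigl[\tfrac{g}{2\sqrt t}\,\partial F-\tfrac{x}{2\sqrt{1-t}}\,\partial F\Bigr],
\qquad F=\av{A}.
\]
We will Taylor expand in the coupling $x_{ij}$ (resp.\ $g_{ij}$) around zero. Gaussian integration by parts and symmetry of $\nu$ make the first-order and third-order contributions vanish, and the second-order terms cancel because both laws have variance one. The first surviving contribution is of fourth order. Each derivative in $J_{ij}$ produces a factor $N^{-1/2}$ times a replica polynomial in $\sigma_i^\ell\sigma_j^\ell$. This follows Yu-Ting Chen's computation~\cite[Eq.~(3.19)]{Chen}. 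The remainder involves $\partial^4_{J_{ij}}\av{A}$ evaluated at an intermediate coupling, multiplied by $N^{-2}$ and moments bounded by $b^4$ (or Gaussian moments).

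The key step is to sum the fourth-order derivatives over the $\binom N2$ pairs without losing a factor $N^2$. Following~\cite[Proposition~4.2 and Example~4.3]{Chen}, the fourth cumulant-type derivative of $\av{A}$ with respect to $J_{ij}$ is a signed combination of $\av{A\prod_{k}\epsilon_k}$ over at most $r+4$ replicas, where each $\epsilon_k$ is a product $\sigma_i^{\ell}\sigma_j^{\ell}$. Summing over $i<j$ and using that the combination is a centered product produces $N^2$ times squared overlaps $R_{\ell\ell'}^2$ and $R_{\ell_1\ell_2\ell_3\ell_4}^2$, plus diagonal terms of order $N$. After dividing by $N^2$, the summed remainder is therefore bounded by
\[
C\norm{A}_\infty\Bigl(\E_t\av{R_{12}^2}+\E_t\av{R_{1234}^2}+N^{-1}\Bigr),
\]
with the expectation taken at the intermediate disorder. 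There are two complications. First, the Taylor remainder sits at an intermediate value $J_{ij}=\theta x_{ij}$ rather than at $J_{ij}(t)$. Second, the bound must be transferred back to the Gaussian measure. For both we use Talagrand's positive-observable estimates~\cite[Lemmas~2.6--2.7]{Talagrand}. Changing one bounded coupling by $O(b)$ changes the Hamiltonian by $O(b/\sqrt N)$, hence changes $\av{B}$ for nonnegative $B$ by a factor $e^{O(b/\sqrt N)}$. This lets us replace intermediate-coupling expectations of the nonnegative quantities $R^2\ge0$ by expectations at $J(t)$, up to constants.

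Next comes \eqref{eq:positive}. Apply the same expansion to $\psi(t)=\log\E\av{A}_t$ for nonnegative $A$. Every derivative term is bounded by $C$ times $\E\av{A'}$ for nonnegative replica tests $A'\le \norm{\cdot}$-multiples of $A$ times bounded spin products. The second-order cancellation combined with Talagrand's multiplicative estimates gives $|\psi'(t)|\le L$, where $L$ sums terms $N^2\cdot N^{-2}$ and is uniform in $N$. Integrating over $t$ yields $e^{\pm L}$. The same estimate applied to $A=R_{12}^2$ and $A=R_{1234}^2$ shows $\E_t\av{R^2}\le e^{L}\E_G\av{R^2}$, so every intermediate expectation is controlled by its Gaussian counterpart.

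The last ingredient is the bound $\E_G\av{R_{1234}^2}\le C(\E_G\av{R_{12}^2}+N^{-1})$. We will try to obtain it from $R_{1234}^2\le$ an average, or from the standard Gaussian fact that four-replica overlaps are dominated. If that fails, we will keep $R_{1234}$ and invoke Chen's Example~4.3, which we expect already reduces it to two-replica overlaps. Integrating $\phi'$ over $[0,1]$ then gives \eqref{eq:comparison}. We expect the main obstacle to be the combinatorial cancellation that turns the summed fourth derivatives into squared overlaps while keeping the constants uniform, specifically when the derivatives are taken at intermediate couplings.
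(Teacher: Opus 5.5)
Your plan has a genuine gap where the summed fourth derivatives are turned into squared overlaps.

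\textbf{Where the argument breaks.} The identity $\sum_{i<j}\prod_{\ell\in B}\sigma_i^\ell\sigma_j^\ell=(N^2R_B^2-N)/2$ yields $R_B^2$ only if all $\binom N2$ edge terms are replica averages under one common Gibbs measure. You treat the fourth-order term as a Taylor remainder at an intermediate coupling. Edge $e$ therefore contributes $\av{AP_{r,4}(b_e)}$ under a disorder in which coupling $e$ alone has been moved. These are $\binom N2$ different measures, so the sum cannot yet be formed. Talagrand's estimates cannot move these terms to a common disorder. They compare \emph{nonnegative} observables up to factors $e^{O(b/\sqrt N)}$, whereas $\av{AP_{r,4}(b_e)}$ is signed; $R^2$ appears only after the summation you cannot yet perform. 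Bounding each edge in absolute value destroys the cancellation and leaves an order-one error. Shifting each signed term with one more derivative costs order $N^{-5/2}$ per edge. Because the intermediate point depends on $x_e$, symmetry does not remove this. After summing you get $O(N^{-1/2})$, not the $N^{-1}$ in \eqref{eq:comparison}.

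\textbf{The missing idea.} Keep the fourth-order coefficient exact and push the remainder to sixth order. Let $g(x)$ be the replica expectation with coupling $e$ set to $x$. Expand $\E_\nu g(X)-\E_\gamma g(G)$ about $x=0$ through order six. Orders zero to three cancel because the moments match, and order five cancels because both laws are symmetric. The fourth-order term is exactly $\frac{\kappa_4}{24}g^{(4)}(0)$ with $\kappa_4=\E_\nu X^4-3$. The remainder is $O(N^{-3}\norm{A}_\infty)$ per edge, hence $O(N^{-1})$ in total. Then restore edge $e$ to its actual law. That law is centered with variance one, so $|\E g^{(4)}(X)-g^{(4)}(0)|\le\frac12\sup|g^{(6)}|=O(N^{-3}\norm{A}_\infty)$. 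Every edge is now under the same disorder law, the fourth-order sum is $\E\bigl[\sum_e\av{AP_{r,4}(b_e)}\bigr]$ with a single Gibbs measure inside, and the overlap identity applies.

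Your smart path can be run the same way: expand in $x_e$ about $0$, then restore $J_e$ from $\sqrt t\,g_e$ using the centering of $\sqrt{1-t}\,x_e$. The paper's mixture path $\rho_s=(1-s)\gamma+s\nu$ gives the single-edge difference directly, without Gaussian integration by parts.

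\textbf{Smaller points.}
\begin{itemize}
\item Your ``centered product'' needs an argument that the reduced polynomial has no constant term. A constant would contribute $\binom N2N^{-2}$, which is of order one. Evaluating the fourth derivative of $\av{1}_x$ at $x=0$ gives this.
\item The four-replica term needs no extra input. At fixed disorder, $\av{R_{1234}^2}=N^{-2}\sum_{i,j}\av{\sigma_i\sigma_j}^4\le\av{R_{12}^2}$.
\end{itemize}
Your outline of \eqref{eq:positive} is sound: multiplicative bounds on the fourth-order remainder, then Gronwall.
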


Examples include fair signs, uniform couplings on $[-\sqrt3,\sqrt3]$,
and $X=\varepsilon\eta/\sqrt p$, where $\varepsilon$ is a fair sign,
$\eta$ is an independent Bernoulli($p$) variable and $p\in(0,1]$ is fixed.
In the last example, $b=p^{-1/2}$, so the constants are not uniform as
$p\downarrow0$.

The estimate compares disorder-averaged replica expectations. In particular,
it applies directly to bounded tests of critically rescaled overlaps: no
derivatives of those test functions are required. The coupling laws agree in their first three moments; their fourth
moments may differ. The fourth derivative must therefore be summed using its
overlap structure; a separate absolute bound for each edge would give an
error of order one. Symmetry cancels the fifth-order term. The sixth-order
Taylor remainder and the centered restoration of one coupling each
contribute $O(N^{-1})$ after summing over the edges.

To state the application, define the random quenched measure
\begin{equation}\label{eq:mu}
 \mu_N^J=\av{\delta_{N^{1/3}R_{12}}},
 \qquad \zeta_N^G=\Law_G(\mu_N^J),
 \qquad \zeta_N^\nu=\Law_\nu(\mu_N^J).
\end{equation}
We equip $\calP_2(\R)$ with its usual Wasserstein distance $\W$ and equip
$\calP_2(\calP_2(\R))$ with the Wasserstein distance $\WW$ whose underlying
distance is $\W$. Thus
\[
 \WW(\zeta,\zeta')^2
 =\inf_{\pi\in\Pi(\zeta,\zeta')}
   \int\W(\mu,\mu')^2\,\pi(d\mu,d\mu').
\]
These are laws of random probability measures, so this topology retains
the disorder-dependent overlap distribution. We call it the iterated
Wasserstein-2 topology.

\begin{corollary}[Critical limit transfer]\label{cor:critical}
Suppose that, for Gaussian disorder in~\eqref{eq:model},
\begin{equation}\label{eq:inputs}
 \WW(\zeta_N^G,\zeta_*)\longrightarrow0
\end{equation}
for some $\zeta_*\in\calP_2(\calP_2(\R))$.
Then, for every coupling law $\nu$ in Theorem~\ref{thm:comparison},
\begin{equation}\label{eq:critical}
 \WW(\zeta_N^\nu,\zeta_*)\longrightarrow0.
\end{equation}
In particular, conditional on the Gaussian assertion in
Du--Huang~\cite[Theorem~1.1(a)]{DHOverlap}, the limit is
$\zeta_*=\Law(\mathfrak P_{a(\chi)})$, with
$\mathfrak P_{a(\chi)}$ defined in their Definitions~1.5 and~1.7.
\end{corollary}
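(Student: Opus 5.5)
Convergence in $\WW$ on $\calP_2(\calP_2(\R))$ is equivalent to weak convergence of the laws, where $\calP_2(\R)$ carries the $\W$ topology, together with convergence of the second moment $\int \W(\mu,\delta_0)^2\,\zeta(d\mu)$. For $\zeta_N^\nu$ this second moment is $\E_\nu\av{N^{2/3}R_{12}^2}$. So the plan has two halves. First, show that $\zeta_N^\nu$ and $\zeta_N^G$ have the same weak subsequential limits, using bounded tests and \eqref{eq:comparison}. Second, show uniform integrability of $\W(\mu,\delta_0)^2$ under $\zeta_N^\nu$, using \eqref{eq:positive}.

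\emph{Bounded tests.} The natural class consists of functionals of the random measure that reduce to replica expectations. For bounded continuous $f_1,\dots,f_k$ on $\R$, a polynomial $\Phi$, and a clipping level $K$, set $\phi_K(x)=\max(-K,\min(x,K))$. Then
\[
F(\mu)=\Phi\Bigl(\int f_1\,d\mu,\dots,\int f_k\,d\mu,\int \phi_K(x^2)\,d\mu\Bigr).
\]
Every monomial is a replica expectation $\av{A}$ with $A$ a product of functions of $N^{1/3}R_{2j-1,2j}$ over disjoint pairs. Such an $A$ is bounded uniformly in $N$, so \eqref{eq:comparison} gives
\[
|\E_\nu F(\mu_N^J)-\E_G F(\mu_N^J)|\le C\bigl(\E_G\av{R_{12}^2}+N^{-1}\bigr).
\]
By \eqref{eq:inputs}, $\E_G\av{N^{2/3}R_{12}^2}$ is bounded, so $\E_G\av{R_{12}^2}=O(N^{-2/3})\to0$. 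Hence $\E_\nu F\to\E_{\zeta_*}F$.

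Such $F$ determine laws on $\calP(\R)$ in the weak topology: polynomials in $\int f_i\,d\mu$ separate points and form an algebra. Tightness of $\zeta_N^\nu$ follows from tightness of the mean measures $\E_\nu\mu_N^J$. That in turn follows from the moment bound below. Therefore $\zeta_N^\nu\to\zeta_*$ weakly on $\calP(\R)$.

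\emph{Tails.} Apply \eqref{eq:positive} with the nonnegative observable $A=N^{2/3}R_{12}^2\,\mathbf 1\{N^{2/3}R_{12}^2>K\}$, which may depend on $N$. This gives
\[
\E_\nu\av{A}\le e^{L_{2,b}}\,\E_G\av{A}.
\]
Convergence \eqref{eq:inputs} implies uniform integrability of $\int x^2\,\mu(dx)$ under $\zeta_N^G$, and hence of $\mu\mapsto\int x^2\,d\mu$ in the mean. So $\sup_N\E_G\av{A}\to0$ as $K\to\infty$, and the same holds for $\nu$. This yields three things: tightness; the fact that $\mu$ lies in $\calP_2$ with $\W$-tightness of $\mu_N^J$ in probability; and uniform integrability of $\W(\mu_N^J,\delta_0)^2=\av{N^{2/3}R_{12}^2}$.

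\emph{Upgrading.} Weak convergence on $\calP(\R)$ must be upgraded to weak convergence on $(\calP_2,\W)$. Combine the $\phi_K(x^2)$ functionals, which show that $\int x^2\,d\mu$ converges jointly in law, with the uniform tail control. This shows that truncation at level $K$ changes $\W$-distances by an amount that is small in $L^2(\zeta_N^\nu)$, uniformly in $N$. For instance, one can couple $\mu$ with its pushforward under clipping at $\pm\sqrt K$, whose $\W^2$ error is at most $\int_{x^2>K}x^2\,d\mu$. Together with uniform integrability of $\W(\mu,\delta_0)^2$, standard Wasserstein theory then gives $\WW(\zeta_N^\nu,\zeta_*)\to0$.

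I expect this last step to be the main obstacle. It requires carefully reducing $\W$-continuous test functionals, which are not polynomial in replica expectations, to the clipped bounded ones while keeping the $K\to\infty$ errors uniform in $N$ under both disorders. The Du--Huang identification of $\zeta_*$ is then immediate.
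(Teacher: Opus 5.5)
Your plan uses the same two inputs as the paper: bounded disjoint-pair tests through \eqref{eq:comparison}, and tails through \eqref{eq:positive}. The bounded-test half is sound. The tail step, however, rests on a false inference. Uniform integrability of $S_N=\int x^2\,d\mu_N^J$ under $\zeta_N^G$ does not give $\sup_N\E_G\int x^2\mathbf 1\{x^2>K\}\,\mu_N^J(dx)\to0$, which is a statement about the mean measures $\E_G\mu_N^J$. For example, $\mu_N=(1-N^{-2})\delta_0+N^{-2}\delta_N$ has $S_N\equiv1$, yet once $N^2>K$ all of its second moment lies above level $K$. The bound you want is true, but it must use the full $\WW$ convergence. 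The paper's device does this. The quantity $h_L(\mu)=\W(\mu,\mu^L)=(\int(|x|-L)_+^2\,d\mu)^{1/2}$ is the $\W$-distance to $\calP([-L,L])$, hence $1$-Lipschitz, so $\|h_L\|_{L^2(\zeta_N^G)}\le\WW(\zeta_N^G,\zeta_*)+\|h_L\|_{L^2(\zeta_*)}$. Combine this with $(|x|-L)_+^2\ge\frac14x^2\mathbf 1\{|x|>2L\}$, and with $|N^{1/3}R_{12}|\le N^{1/3}$ for the finitely many small $N$. This yields your uniform tail bound.

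The step you flag as the main obstacle closes without reducing general $\W$-continuous functionals to polynomial ones. Your functionals $F$ are bounded and $\W$-continuous, and they form a point-separating algebra on the Polish space $(\calP_2(\R),\W)$. A point-separating subalgebra of bounded continuous functions on a Polish space is measure-determining, so they determine laws there. For $K_m\uparrow\infty$ and $\epsilon_m\downarrow0$, the sets $\{\mu:\int_{x^2>K_m}x^2\,d\mu\le\epsilon_m\ \forall m\}$ are $\W$-compact. Your corrected tail bound under $\nu$ therefore gives tightness of $\zeta_N^\nu$ directly in $(\calP_2,\W)$. Prokhorov together with $\E_\nu F\to\E_{\zeta_*}F$ then gives weak convergence there, and uniform integrability gives convergence of the second moments. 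The detour through $\calP(\R)$ and the $\phi_K(x^2)$ functionals is unnecessary.

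The paper is shorter still because it clips first. On the compact space $\calP([-L,L])$, Stone--Weierstrass applied to $\prod_j\int\varphi_j\,d\mu^L$ identifies the limit with no tightness argument. There $\W$ is bounded, so weak convergence of the laws already means $\WW(\zeta_N^{\nu,L},\zeta_*^L)\to0$. The metric triangle inequality
\[
 \WW(\zeta_N^\nu,\zeta_*)\le\|h_L\|_{L^2(\zeta_N^\nu)}
 +\WW(\zeta_N^{\nu,L},\zeta_*^L)+\|h_L\|_{L^2(\zeta_*)}
\]
then finishes the proof. Here $\|h_L\|_{L^2(\zeta_N^\nu)}^2\le e^{L_{2,b}}\|h_L\|_{L^2(\zeta_N^G)}^2$ follows from \eqref{eq:positive} applied to $(|N^{1/3}R_{12}|-L)_+^2$. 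That inequality, rather than any approximation of test functionals, is what your last step was missing.
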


The GOE convention in the Gaussian paper agrees
with~\eqref{eq:model}: in $H_N(\sigma)=\tfrac12(\sigma,W\sigma)$,
the off-diagonal entries have variance $1/N$, while the diagonal term
$\tfrac12\sum_iW_{ii}\sigma_i^2$ is independent of $\sigma$ and cancels
from the Gibbs measure.

Assumption~\eqref{eq:inputs} bounds the Gaussian expectation of
$\int x^2\,\mu_N^J(dx)=N^{2/3}\av{R_{12}^2}$, so the finite-volume
estimate is
$O_{r,b}(\norm{A}_\infty N^{-2/3})$ for each fixed $r$.
The random-measure convergence in~\eqref{eq:critical} is qualitative.
Section~\ref{sec:susceptibility} derives the limiting law and moments
of the critical spin-glass susceptibility. Figure~\ref{fig:transfer} shows why the proof
uses both parts of Theorem~\ref{thm:comparison}.

\begin{figure}[tbp]
\centering
\resizebox{\linewidth}{!}{%
\begin{tikzpicture}[font=\small,>=Stealth,
  box/.style={draw,rounded corners=2pt,align=center,text width=6.2cm,
    minimum height=1.4cm,inner sep=7pt},
  flow/.style={->,thick}]
  \node[box,draw=SKBlue] (bounded) at (0,0)
    {\textbf{Bounded replica tests}\\[3pt]
     comparison error $\le C_{r,b}\|A\|_\infty N^{-2/3}$};
  \node[box,draw=SKGreen] (positive) at (7.4,0)
    {\textbf{Positive replica tests}\\[3pt]
     $\E_\nu\av A\le e^{L_{r,b}}\E_G\av A$};
  \node[box,draw=SKBlue] (identify) at (0,-2.05)
    {Clipped quenched measures\\[3pt]
     $\E\prod_j\int\varphi_j\,d\mu_N^{J,L}$};
  \node[box,draw=SKGreen] (tails) at (7.4,-2.05)
    {Quadratic clipping error\\[3pt]
     $\E_\nu h_L(\mu_N^J)^2\le e^{L_{2,b}}\E_G h_L(\mu_N^J)^2$};
  \draw[flow,SKBlue] (bounded)--node[right,font=\footnotesize]
    {disjoint replica pairs}(identify);
  \draw[flow,SKGreen] (positive)--node[right,font=\footnotesize]
    {$A=(|N^{1/3}R_{12}|-L)_+^2$}(tails);
  \node[box,text width=9.7cm] (limit) at (3.7,-4.25)
    {\textbf{Same law of the random overlap measure}\\[3pt]
     $\mathcal W_2(\zeta_N^\nu,\zeta_*)\longrightarrow0$};
  \draw[flow,SKBlue] (identify.south)--node[left,font=\footnotesize]
    {identifies the limit}(identify.south |- limit.north);
  \draw[flow,SKGreen] (tails.south)--node[right,font=\footnotesize]
    {remove clipping}(tails.south |- limit.north);
\end{tikzpicture}}
\caption{The disorder transfer under the Gaussian
assumption~\eqref{eq:inputs}. Clipping overlaps to $[-L,L]$ gives
$\mu_N^{J,L}$ and the error
$h_L(\mu)^2=\int(|x|-L)_+^2\,\mu(dx)$.
For fixed $L$, bounded replica tests identify the law of the clipped
measure on a compact space. The positive comparison then allows
$L\to\infty$. The rate in the upper left applies to fixed replica
tests, not to the distance in the final box.}
\label{fig:transfer}
\end{figure}

\section{Replica derivatives and positive comparison}

Let $\gamma$ be the standard Gaussian law and put
\[
 \rho_s=(1-s)\gamma+s\nu,\qquad
 F_s(A)=\E_{\rho_s}\av A,\qquad
 q_s=F_s(R_{12}^2),\qquad a=N^{-1/2},\qquad 0\le s\le1.
\]
Each coupling is sampled independently from the mixture law $\rho_s$.
For a fixed edge $e$, let $g_e(x)$ be the replica expectation with that
coupling set to $x$ and the other couplings held fixed. Differentiating
the finite product of mixture laws gives the exact identity
\begin{equation}\label{eq:mixture}
 F_s'(A)=\sum_e\E_{\rho_s}^{(e)}
 \left[\E_\nu g_e(X)-\E_\gamma g_e(G)\right],
\end{equation}
where $\E_{\rho_s}^{(e)}$ averages only the other edges. We will estimate
this single-edge difference by Taylor expansion, retaining its sign
until the fourth-order terms have been summed.

\subsection{Replica derivatives}

Fix $e=\{i,j\}$, write $b_\ell=\sigma_i^\ell\sigma_j^\ell$, and abbreviate
$g_e$ to $g$. Put $P_{r,0}=1$ and
\begin{equation}\label{eq:polynomial}
 P_{r,k}(b)=\prod_{j=0}^{k-1}
 \left(\sum_{\ell=1}^{r+j}b_\ell-(r+j)b_{r+j+1}\right),
 \qquad D_{r,k}=2^k r(r+1)\cdots(r+k-1).
\end{equation}
Differentiating a normalized replica expectation introduces one additional
replica at each step, giving
\begin{equation}\label{eq:derivatives}
 g^{(k)}(x)=a^k\av{A P_{r,k}}_x,\qquad
 \|P_{r,k}\|_\infty\le D_{r,k}.
\end{equation}
This is Chen's replica differentiation identity~\cite[Eqs.~(2.1) and~(2.6)]{Chen}
for one unordered edge. The coefficient $\ell^1$ norm of $P_{r,k}$ is
also at most $D_{r,k}$, including after reduction by $b_\ell^2=1$.
For $A\ge0$, the derivative bound retains the expectation:
\begin{equation}\label{eq:relative-derivative}
 |g^{(k)}(x)|\le a^kD_{r,k}g(x),\qquad
 g(x)\le e^{2ra|x-y|}g(y).
\end{equation}
The second inequality compares the $r$-replica Gibbs densities: the
unnormalized density ratio and its normalization each contribute at most
$e^{ra|x-y|}$. This is the local positive-observable argument of
Talagrand~\cite[Lemmas~2.6--2.7]{Talagrand}. Locators for that paper and
for~\cite{AC} refer to the author preprints listed in the bibliography.

\subsection{A bound that retains positive observables}

\begin{lemma}[Positive comparison along the mixture]\label{lem:positive}
There is $L_{r,b}<\infty$ such that, for $A\ge0$ and $0\le s\le1$,
\begin{equation}\label{eq:positive-interpolation}
 e^{-L_{r,b}s}F_0(A)\le F_s(A)\le e^{L_{r,b}s}F_0(A).
\end{equation}
\end{lemma}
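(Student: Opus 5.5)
The plan is a Grönwall argument along the mixture interpolation: I will show that for $A\ge0$,
\[
 |F_s'(A)|\le L_{r,b}\,F_s(A),\qquad 0\le s\le1,
\]
which integrates to \eqref{eq:positive-interpolation}. The starting point is the exact identity \eqref{eq:mixture}. It therefore suffices to bound each single-edge difference $\E_\nu g(X)-\E_\gamma g(G)$ by a constant times $a^4$ times $\E_{\rho_s}g(Y)$, where $Y\sim\rho_s$ is the coupling on edge $e$. Then $\E_{\rho_s}^{(e)}\E_{\rho_s} g(Y)=F_s(A)$, and summing over the $N(N-1)/2$ edges with $a^4=N^{-2}$ gives a bound of order one times $F_s(A)$. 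Positivity of $A$ lets us use the crude absolute bounds throughout, with no cancellation across edges.

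For the single-edge estimate, I would Taylor expand $g$ about $0$ to third order with a fourth-order Lagrange remainder. Both $\nu$ and $\gamma$ are symmetric, so the first- and third-order terms vanish. Both have variance one, so the second-order terms cancel. The difference is thus
\[
 \E_\nu\bigl[\tfrac{X^4}{24}g^{(4)}(\xi_X)\bigr]-\E_\gamma\bigl[\tfrac{G^4}{24}g^{(4)}(\xi_G)\bigr],
\]
with $\xi$ between $0$ and the coupling. By \eqref{eq:relative-derivative},
\[
 |g^{(4)}(\xi)|\le a^4D_{r,4}\,g(\xi)\le a^4D_{r,4}\,e^{2ra|\xi|}g(0).
\]
Since $|\xi|\le|X|\le b$, and $\E_\gamma G^4e^{2ra|G|}$ is bounded uniformly in $N\ge2$ (as $a\le1$), the difference is at most $C_{r,b}\,a^4\,g(0)$.

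It remains to replace $g(0)$ by an average over the mixture law. The second inequality in \eqref{eq:relative-derivative} gives $g(0)\le e^{2ra|y|}g(y)$, so
\[
 g(0)\,\E_{\rho_s}e^{-2ra|Y|}\le\E_{\rho_s}g(Y).
\]
By Jensen, $\E_{\rho_s}e^{-2ra|Y|}\ge e^{-2ra\E_{\rho_s}|Y|}\ge e^{-2r(b+1)}$, uniformly in $s$ and $N$, because $\E|Y|\le b+1$ for either component of the mixture. Combining the two steps yields
\[
 |\E_\nu g(X)-\E_\gamma g(G)|\le C'_{r,b}\,a^4\,\E_{\rho_s}g(Y).
\]
Averaging over the other edges and summing over $e$ then gives the differential inequality with $L_{r,b}=C'_{r,b}/2$.

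The main point to get right is the order of the expansion. Stopping at a second- or third-order remainder would leave $a^3N^2=N^{1/2}$, which is not summable, so the fourth-order remainder is exactly what is needed. It is affordable only because the first three moments match and because the relative bound \eqref{eq:relative-derivative} keeps $g$ itself, rather than $\norm{A}_\infty$, on the right-hand side. Finally, $F_s(A)$ is differentiable in $s$ for fixed $N$ (it is a polynomial in $s$), so Grönwall applies directly. The case $F_0(A)=0$ is handled by the same inequality: it forces $F_s(A)=0$ for all $s$.
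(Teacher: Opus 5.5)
Your proposal is correct and follows the paper's proof essentially step for step. The chain is the same: a fourth-order Taylor expansion with the first three moments cancelling, the relative bound \eqref{eq:relative-derivative} giving a remainder of order $a^4 g(0)$, the density comparison replacing $g(0)$ by $\E_{\rho_s}g$, summation using $\binom N2 a^4\le 1/2$, and Gr\"onwall. The only cosmetic difference is that you lower-bound $\E_{\rho_s}e^{-2ra|Y|}$ by Jensen, obtaining $e^{-2r(b+1)}$, where the paper instead uses $\min\{e^{-2rb},\E_\gamma e^{-2r|G|}\}$ over the two mixture components.
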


\begin{proof}
Taylor expansion of $g$ through degree three cancels the matched moments.
By~\eqref{eq:relative-derivative} and $a\le1$,
\[
 |\E_\nu g(X)-\E_\gamma g(G)|
 \le\frac{D_{r,4}a^4}{24}M_{r,b}g(0),\qquad
 M_{r,b}=b^4e^{2rb}+\E_\gamma[|G|^4e^{2r|G|}]<\infty.
\]
The same density comparison gives
\[
 \E_{\rho_s}g(X)\ge m_{r,b}g(0),\qquad
 m_{r,b}=\min\{e^{-2rb},\E_\gamma e^{-2r|G|}\}>0.
\]
Since $\binom N2a^4\le1/2$, summing~\eqref{eq:mixture} yields
\[
 |F_s'(A)|\le L_{r,b}F_s(A),\qquad
 L_{r,b}=\frac{D_{r,4}M_{r,b}}{48m_{r,b}}.
\]
Gronwall's inequality proves the result, including $F_0(A)=0$.
\end{proof}

\section{Fourth-order cancellation and the error bound}

The first unmatched moment in~\eqref{eq:mixture} is the fourth.
The cancellation below is the two- and four-replica structure in
Chen~\cite[Proposition~4.2 and Example~4.3]{Chen}, written directly for
the independent unordered edges in~\eqref{eq:model}.

\begin{lemma}[Summed fourth derivative]\label{lem:cancellation}
For any deterministic $r$-replica observable $A$ and $0\le s\le1$,
\begin{equation}\label{eq:cancellation-bound}
 \left|N^{-2}\sum_e\E\av{A P_{r,4}(b_e)}_s\right|
 \le\frac{D_{r,4}}2\norm{A}_\infty q_s.
\end{equation}
\end{lemma}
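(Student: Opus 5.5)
Expand $P_{r,4}(b)$ as a polynomial in the variables $b_1,\dots,b_{r+4}$ and reduce with $b_\ell^2=1$. Each surviving monomial is then a product $\prod_{\ell\in S}b_\ell$ over a set $S\subseteq\{1,\dots,r+4\}$ of even cardinality, so $|S|\in\{0,2,4\}$. Odd sets are absent because every factor in~\eqref{eq:polynomial} is homogeneous of degree one, so the total degree is four. The summed fourth derivative therefore has the form
\[
 N^{-2}\sum_e\E\av{A P_{r,4}(b_e)}_s=\sum_S c_S\,N^{-2}\sum_{i<j}\E\av{A\textstyle\prod_{\ell\in S}\sigma^\ell_i\sigma^\ell_j}_s ,
\]
with $\sum_S|c_S|\le D_{r,4}$ by the $\ell^1$ remark after~\eqref{eq:derivatives}.

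The key step is that the constant monomial $S=\emptyset$ has coefficient zero. This is Chen's cancellation. One sees it by setting every $b_\ell=1$: the first factor $\sum_{\ell\le r}b_\ell-rb_{r+1}$ vanishes. Reduction by $b_\ell^2=1$ does not change the value at $b\equiv1$, so $\sum_S c_S=0$. That alone is not the statement $c_\emptyset=0$, however. The cleaner route is to identify the constant term directly. It is the expectation of $P_{r,4}$ when the $b_\ell$ are independent fair signs, since then $\E\prod_{\ell\in S}b_\ell=0$ for $S\neq\emptyset$. I would compute this expectation following Chen's Example~4.3. Should it fail to vanish, I would fall back on a different mechanism: the constant term gives $N^{-2}\binom N2 c_\emptyset\E\av A_s$, which is of order one. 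It would then have to be cancelled by the corresponding Gaussian-versus-$\nu$ structure instead. That is exactly where the paper's reference to Chen is needed, so I expect Chen's computation to show $c_\emptyset=0$. Confirming this is the main obstacle.

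For $|S|\in\{2,4\}$, write the edge sum through overlaps:
\[
 \sum_{i<j}\prod_{\ell\in S}\sigma^\ell_i\sigma^\ell_j=\tfrac12\Bigl(N^2R_S^2-N\Bigr),\qquad R_S=N^{-1}\sum_i\prod_{\ell\in S}\sigma_i^\ell .
\]
Here the diagonal correction uses $(\prod_{\ell\in S}\sigma_i^\ell)^2=1$. Each such monomial therefore contributes $\tfrac12\E\av{A R_S^2}_s-\tfrac1{2N}\E\av{A}_s$. The term $\tfrac1{2N}\E\av A_s$ appears once for every monomial. Summing it against $c_S$ gives $\tfrac1{2N}\E\av{A}_s\sum_{S\ne\emptyset}c_S$, which vanishes by the $b\equiv1$ identity together with $c_\emptyset=0$. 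The remaining term is bounded by $\tfrac12\norm A_\infty\E\av{R_S^2}_s$.

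Finally, compare the multi-replica overlaps to $q_s$. For $|S|=2$, exchangeability gives $\E\av{R_S^2}_s=q_s$ directly. For $|S|=4$, with $S=\{1,2,3,4\}$, write $R_S=N^{-1}\sum_i\sigma^1_i\sigma^2_i\tau_i$ where $\tau_i=\sigma^3_i\sigma^4_i$. Expanding the square and integrating out replicas $3,4$ first, the Gibbs average of $\tau_i\tau_k$ equals $\av{\sigma_i\sigma_k}^2\in[0,1]$. Hence
\[
 \av{R_S^2}=N^{-2}\sum_{i,k}\av{\sigma_i\sigma_k}^4\le N^{-2}\sum_{i,k}\av{\sigma_i\sigma_k}^2=\av{R_{12}^2} .
\]
Combining, $\sum_S|c_S|\cdot\tfrac12\norm A_\infty q_s\le \tfrac{D_{r,4}}2\norm A_\infty q_s$, which is~\eqref{eq:cancellation-bound}.
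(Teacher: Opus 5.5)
You reach the paper's proof at every step but one, and that step is left open. The multilinear expansion, the parity argument, $\sum_Sc_S=0$ from $b\equiv1$, the unordered-edge identity, the cancellation of the $N$-terms, and $\av{R_S^2}_s\le\av{R_{12}^2}_s$ via $\av{\sigma_i\sigma_k}^4\le\av{\sigma_i\sigma_k}^2$ all match the paper. The gap is the one you flag: you never prove $c_\emptyset=0$, you only expect it from an uncomputed citation.

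This step carries the whole estimate. With $\sum_Sc_S=0$, your own bookkeeping gives exactly
\[
 N^{-2}\sum_e\E\av{A P_{r,4}(b_e)}_s
 =\frac{c_\emptyset}{2}\,\E\av{A}_s
 +\frac12\sum_{S\neq\emptyset}c_S\,\E\av{A R_S^2}_s .
\]
A nonzero $c_\emptyset$ would therefore leave an order-one term that nothing in your argument bounds by $q_s$. Your fallback is not available. The quantity in the lemma involves only the single mixture law $\rho_s$, so it contains no Gaussian-versus-$\nu$ difference that could cancel such a term. In the theorem, the lemma is applied after the factor $\kappa_4$ has been pulled out, so an uncancelled $c_\emptyset$ would simply add an $O(1)$ term to $F_s'(A)$.

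The missing argument is one line, and it builds on your observation that $c_\emptyset$ is the fair-sign expectation of $P_{r,4}$. The derivative identity \eqref{eq:derivatives} is algebraic and holds for every fixed environment of the other couplings. Apply it with $A\equiv1$ and all other couplings set to zero. At $x=0$ the Gibbs measure is then uniform on $\Sigma_N$. So for $e=\{i,j\}$ the variables $b_\ell=\sigma_i^\ell\sigma_j^\ell$, $1\le\ell\le r+4$, are independent fair signs, and $\av{P_{r,4}(b)}_0=c_\emptyset$. Since $g(x)=\av{1}_x\equiv1$, you get $0=g^{(4)}(0)=a^4c_\emptyset$, hence $c_\emptyset=0$ for every $r$. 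The same argument kills the constant coefficient of every $P_{r,k}$ with $k\ge1$. With this inserted, your proof is complete and coincides with the paper's.
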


\begin{proof}
After reduction by $b_\ell^2=1$, the polynomial $P_{r,4}$ has only degrees
zero, two and four. Its constant coefficient is zero for every $r$.
To see this directly, set $A=1$, remove all couplings, and evaluate the
fourth derivative in~\eqref{eq:derivatives} at $x=0$. The $b_\ell$ are then
independent fair signs, so their expectation selects the constant coefficient;
the derivative of $\av{1}_x=1$ vanishes.
Also $P_{r,4}(1,\ldots,1)=0$. Consequently
\begin{equation}\label{eq:coefficients}
 P_{r,4}(b)=\sum_{B:\,|B|\in\{2,4\}}c_B\prod_{\ell\in B}b_\ell,
 \qquad \sum_Bc_B=0,
 \qquad \sum_B|c_B|\le D_{r,4}.
\end{equation}
Here $B\subseteq\{1,\ldots,r+4\}$.
For $R_B=N^{-1}\sum_i\prod_{\ell\in B}\sigma_i^\ell$, we have exactly
\begin{equation}\label{eq:unordered}
 \sum_{i<j}\prod_{\ell\in B}(\sigma_i^\ell\sigma_j^\ell)
 =\frac{N^2R_B^2-N}{2}.
\end{equation}
The terms proportional to $N$ cancel by $\sum_Bc_B=0$.
At fixed disorder, replica independence gives
\[
 \av{R_B^2}_s
 =\frac1{N^2}\sum_{i,j}\av{\sigma_i\sigma_j}_s^{|B|}
 \le\frac1{N^2}\sum_{i,j}\av{\sigma_i\sigma_j}_s^2
 =\av{R_{12}^2}_s,
\]
because $|B|$ is two or four. Sum~\eqref{eq:unordered} with the coefficients
in~\eqref{eq:coefficients}, multiply by $A$, and take absolute values only
after this sum. The claimed bound follows.
\end{proof}

The identity~\eqref{eq:unordered} also fixes the normalization: replacing
an unordered coupling by a sum of two independent ordered couplings
would generally change its law.

\begin{proof}[Proof of Theorem~\ref{thm:comparison}]
The positive comparison is Lemma~\ref{lem:positive} at $s=1$.
For the signed estimate, set $\kappa_4=\E_\nu X^4-3$.
Symmetry cancels the odd moments through degree five. Taylor expansion
and~\eqref{eq:derivatives} therefore give, for every fixed edge environment,
\begin{equation}\label{eq:fourth-term}
 \E_\nu g(X)-\E_\gamma g(G)
 =\frac{\kappa_4}{24}g^{(4)}(0)
   +O_{r,b}(a^6\|A\|_\infty).
\end{equation}
Here the sixth moments are bounded by $b^6$ and $15$, respectively,
and the sixth derivative bound is uniform on the whole real line.
To restore the edge to its full mixture law, expand $g^{(4)}$ to first
order at zero. The mixture is centered and has variance one, so
\begin{equation}\label{eq:restore}
 |\E_{\rho_s}g^{(4)}(X)-g^{(4)}(0)|
 \le\tfrac12a^6D_{r,6}\|A\|_\infty.
\end{equation}
Equations~\eqref{eq:mixture}, \eqref{eq:fourth-term} and~\eqref{eq:restore}
place all edges under the same disorder law:
\begin{equation}\label{eq:restored}
 F_s'(A)=\frac{\kappa_4a^4}{24}
     \sum_e\E_{\rho_s}\av{A P_{r,4}(b_e)}
     +O_{r,b}(N^{-1}\|A\|_\infty).
\end{equation}
The error uses $\binom N2a^6\le(2N)^{-1}$; the fourth-cumulant
coefficient is common to all edges because their laws are identical.
Lemma~\ref{lem:cancellation} bounds the sum, and Lemma~\ref{lem:positive}
gives $q_s\le e^{L_{2,b}}q_0$. Hence
\[
 |F_s'(A)|\le\|A\|_\infty
 \left(\frac{|\kappa_4|D_{r,4}}{48}q_s
       +\frac{C'_{r,b}}{N}\right).
\]
Since $|\kappa_4|\le b^4+3$, integration over $s\in[0,1]$ proves
\eqref{eq:comparison} with a constant depending only on $r$ and $b$.
\end{proof}

\section{Convergence of the random overlap measure}
\label{sec:convergence}

We prove Corollary~\ref{cor:critical} by clipping the overlap to a compact
interval and then removing the clipping. This uses only the Gaussian
convergence~\eqref{eq:inputs}; no separate exponential moment bound is
needed. The compactness and moment principles are the same as those used
in~\cite[Section~3]{DHOverlap}.

First, the triangle inequality against $\delta_{\delta_0}$ gives
\[
 \sup_N\E_G\int x^2\,\mu_N^J(dx)<\infty,
 \qquad q_0=O(N^{-2/3}).
\]
For $L>0$, let $T_L(x)=\max\{-L,\min\{x,L\}\}$ and denote by
$\mu^L=(T_L)_\#\mu$ the law obtained by clipping a sample from $\mu$.
Write $\zeta_N^{J,L}=\Law_J((\mu_N^J)^L)$ for $J=G,\nu$, and let
$\zeta_*^L$ be the corresponding image of $\zeta_*$.
The space $\mathcal C_L=\calP([-L,L])$ is compact in $\W$, and clipping
is a $1$-Lipschitz map in that metric. Thus~\eqref{eq:inputs} implies
$\WW(\zeta_N^{G,L},\zeta_*^L)\to0$.

For continuous real functions $\varphi_1,\ldots,\varphi_m$ on $[-L,L]$,
use the bounded $2m$-replica observable
\[
 A_N=\prod_{j=1}^m
 \varphi_j\bigl(T_L(N^{1/3}R_{2j-1,2j})\bigr).
\]
Conditional independence of the disjoint replica pairs gives
\begin{equation}\label{eq:moments}
 \av{A_N}=\prod_{j=1}^m\int\varphi_j\,d(\mu_N^J)^L.
\end{equation}
Theorem~\ref{thm:comparison} makes the Gaussian and $\nu$-disorder expectations
of~\eqref{eq:moments} asymptotically equal. Finite linear combinations of
these products form an algebra of continuous functions on $\mathcal C_L$
that contains constants and separates measures. Stone--Weierstrass
therefore gives weak convergence of $\zeta_N^{\nu,L}$ to $\zeta_*^L$.
Compactness upgrades this to
\begin{equation}\label{eq:clipped-limit}
 \WW(\zeta_N^{\nu,L},\zeta_*^L)\longrightarrow0
 \qquad\text{for each fixed }L.
\end{equation}

To remove the clipping, put
\[
 h_L(\mu)=\W(\mu,\mu^L)
 =\left(\int(|x|-L)_+^2\,\mu(dx)\right)^{1/2}.
\]
The displayed equality follows from the coupling $x\mapsto T_L(x)$:
every target point in $[-L,L]$ is at least $(|x|-L)_+$ from $x$.
In particular, $h_L$ is the distance to $\mathcal C_L$, hence is
$1$-Lipschitz in $\W$. The positive comparison applied to
$A_N=(|N^{1/3}R_{12}|-L)_+^2$ yields
\begin{align}
 \WW(\zeta_N^\nu,\zeta_N^{\nu,L})
 &\le e^{L_{2,b}/2}\|h_L\|_{L^2(\zeta_N^G)}\notag\\
 &\le e^{L_{2,b}/2}
 \left(\WW(\zeta_N^G,\zeta_*)+\|h_L\|_{L^2(\zeta_*)}\right).
 \label{eq:clipping-error}
\end{align}
The second inequality follows by coupling the two random measures and
using the Lipschitz property and Minkowski's inequality. All laws here
have finite outer second moments; for the $\nu$-disorder law this also follows
from~\eqref{eq:positive} with $A_N=N^{2/3}R_{12}^2$.

Finally, $\WW(\zeta_*,\zeta_*^L)\le\|h_L\|_{L^2(\zeta_*)}$.
Combining~\eqref{eq:clipped-limit}--\eqref{eq:clipping-error} gives
\[
 \limsup_{N\to\infty}\WW(\zeta_N^\nu,\zeta_*)
 \le (e^{L_{2,b}/2}+1)\|h_L\|_{L^2(\zeta_*)}.
\]
The right side tends to zero as $L\to\infty$ by dominated convergence,
since $\zeta_*$ has a finite outer second moment. This proves the corollary.

\section{Critical spin-glass susceptibility}\label{sec:susceptibility}

The random-measure limit also controls a standard response observable.
At zero field, define the sample spin-glass susceptibility by
\begin{equation}\label{eq:susceptibility}
 \chi_{\mathrm{SG},N}^J
 =\frac1N\sum_{i,j}\av{\sigma_i\sigma_j}_J^2
 =N\av{R_{12}^2}_J.
\end{equation}
Every one-spin expectation vanishes by global spin-flip symmetry, so the
two-spin expectation here is also the connected correlation. The
rescaled susceptibility is the second moment of the quenched measure:
\[
 S_N^J=N^{-1/3}\chi_{\mathrm{SG},N}^J
      =\int x^2\,\mu_N^J(dx).
\]

\begin{corollary}[Susceptibility law and moments]\label{cor:susceptibility}
Assume~\eqref{eq:inputs}, and let $\mu_*$ have law $\zeta_*$.
Then the law of $S_N^J$ under $\nu$-disorder converges in $W_1$ to
the law of $S_*:=\int x^2\,\mu_*(dx)$.
If, in addition, there are $c>0$ and $M<\infty$ such that
\begin{equation}\label{eq:exponential-input}
 \sup_{N\ge2}\E_G\av{e^{cN^{1/3}|R_{12}|}}\le M,
\end{equation}
then, for every finite $p\ge1$,
\begin{equation}\label{eq:susceptibility-limit}
 W_p\bigl(\Law_\nu(S_N^J),\Law(S_*)\bigr)\longrightarrow0,
 \qquad \E_\nu[(S_N^J)^p]\longrightarrow\E[S_*^p]<\infty.
\end{equation}
Here $W_p$ is the usual Wasserstein distance on probability laws on $\R$.
\end{corollary}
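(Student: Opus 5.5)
The plan has two steps: a $W_1$ statement for the second moment, and then a moment upgrade using the positive comparison.

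\emph{First part.} The map $\mu\mapsto m_2(\mu)^{1/2}=\W(\mu,\delta_0)$ is $1$-Lipschitz in $\W$. Given a coupling $\pi$ of $\zeta_N^\nu$ and $\zeta_*$, the difference of second moments factors as
\[
|m_2(\mu)-m_2(\mu')|=|m_2(\mu)^{1/2}-m_2(\mu')^{1/2}|\,\bigl(m_2(\mu)^{1/2}+m_2(\mu')^{1/2}\bigr)\le \W(\mu,\mu')\bigl(m_2(\mu)^{1/2}+m_2(\mu')^{1/2}\bigr).
\]
Integrating against $\pi$ and applying Cauchy--Schwarz gives
\[
W_1\bigl(\Law_\nu(S_N^J),\Law(S_*)\bigr)\le\int|m_2(\mu)-m_2(\mu')|\,d\pi\le \WW(\zeta_N^\nu,\zeta_*)\Bigl(\|m_2^{1/2}\|_{L^2(\zeta_N^\nu)}+\|m_2^{1/2}\|_{L^2(\zeta_*)}\Bigr).
\]
Corollary~\ref{cor:critical} sends the first factor to zero. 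For the second factor, \eqref{eq:positive} applied to $A=N^{2/3}R_{12}^2$ gives $\E_\nu S_N^J\le e^{L_{2,b}}\E_G S_N^J$. This is bounded uniformly in $N$ because $\E_G S_N^J=\WW(\zeta_N^G,\delta_{\delta_0})^2$, which is bounded by \eqref{eq:inputs}. Hence the $W_1$ convergence holds.

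\emph{Second part.} Convergence in $W_1$ gives convergence in law, and $W_p$ convergence is equivalent to convergence in law plus convergence of $p$-th moments. It therefore suffices to show that $\sup_N\E_\nu[(S_N^J)^{p'}]<\infty$ for some $p'>p$, since this gives uniform integrability of $(S_N^J)^p$. The main obstacle is that $(S_N^J)^{p'}$ is a product of quenched expectations, not a single replica expectation with a fixed number of replicas independent of the moment order. For integer $k$, write
\[
(S_N^J)^k=\prod_{j=1}^k\av{N^{2/3}R_{2j-1,2j}^2}=\av{\prod_{j=1}^k N^{2/3}R_{2j-1,2j}^2},
\]
using disjoint replica pairs as in \eqref{eq:moments}. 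This is a nonnegative $2k$-replica observable, so \eqref{eq:positive} with $r=2k$ gives $\E_\nu[(S_N^J)^k]\le e^{L_{2k,b}}\E_G[(S_N^J)^k]$. This reduces the problem to bounding Gaussian moments.

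On the Gaussian side, Jensen's inequality for the Gibbs average and then for the disorder average gives $\E_G(S_N^J)^k\le\E_G\av{N^{2k/3}R_{12}^{2k}}$. The exponential bound \eqref{eq:exponential-input} controls the right side via $x^{2k}\le (2k/c)^{2k}e^{c|x|}$, so it is at most $(2k/c)^{2k}M$, uniformly in $N$. Taking an integer $k>p$ gives the uniform integrability, and hence both the $W_p$ convergence and $\E_\nu[(S_N^J)^p]\to\E[S_*^p]$. Finiteness of $\E[S_*^p]$ follows from Fatou's lemma, or from the uniform bound on $\E_\nu[(S_N^J)^k]$, because $S_N^J\to S_*$ in law.
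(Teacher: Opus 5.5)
Your proof is correct. Its skeleton matches the paper's: convergence in law, plus a uniform higher-moment bound obtained from the positive comparison and the exponential input, then uniform integrability. Two steps are done differently, though.

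\textbf{The $W_1$ claim.} The paper argues qualitatively. The map $\mu\mapsto\int x^2\,d\mu$ is $\W$-continuous, $\WW$-convergence also gives convergence of the outer second moments $\E_\nu S_N^J$, and the Wasserstein moment criterion finishes. Your factorization $|m_2(\mu)-m_2(\mu')|\le\W(\mu,\mu')\bigl(m_2(\mu)^{1/2}+m_2(\mu')^{1/2}\bigr)$, combined with Cauchy--Schwarz, instead gives an explicit estimate $W_1\le C\,\WW(\zeta_N^\nu,\zeta_*)$. That is slightly more informative. The uniform bound on $\E_\nu S_N^J$ that you take from \eqref{eq:positive} could equally come from $\WW(\zeta_N^\nu,\zeta_*)\to0$ itself.

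\textbf{The moment bound.} The paper applies Jensen on the $\nu$ side first, $(S_N^J)^k\le\int|x|^{2k}\,\mu_N^J(dx)$, so it only needs the two-replica positive comparison. That comparison transfers the exponential bound itself, $\sup_N\E_\nu\int e^{c|x|}\,\mu_N^J(dx)\le e^{L_{2,b}}M$, which controls every moment with one constant. You instead write $(S_N^J)^k$ as a nonnegative $2k$-replica observable using disjoint pairs, and apply \eqref{eq:positive} with $r=2k$. You then use Jensen only on the Gaussian side. This is valid because $k$ is fixed, but the comparison constant $e^{L_{2k,b}}$ now grows with the moment order. The ``main obstacle'' you flag, that $(S_N^J)^k$ is not a fixed-replica observable, disappears if Jensen is applied before the disorder comparison rather than after.
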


\begin{proof}
The map $\mu\mapsto\int x^2\,\mu(dx)$ is continuous in $\W$, so
Corollary~\ref{cor:critical} gives convergence in distribution.
Its iterated Wasserstein conclusion also gives convergence of the
outer second moments, which are exactly $\E_\nu S_N^J$.
Since $S_N^J,S_*\ge0$, the Wasserstein moment criterion gives $W_1$
convergence.
Under~\eqref{eq:exponential-input}, the positive comparison yields
\begin{equation}\label{eq:transferred-moment}
 \sup_N\E_\nu\int e^{c|x|}\,\mu_N^J(dx)\le e^{L_{2,b}}M.
\end{equation}
For every real $k\ge1$, Jensen's inequality gives
\[
 \sup_N\E_\nu[(S_N^J)^k]
 \le\sup_N\E_\nu\int |x|^{2k}\,\mu_N^J(dx)<\infty.
\]
Choosing $k>p$ makes the $p$th powers uniformly integrable. Their
expectations therefore converge, and the Wasserstein moment criterion
gives~\eqref{eq:susceptibility-limit}.
\end{proof}

Du--Huang~\cite[Proposition~3.2]{DHOverlap} state
\eqref{eq:exponential-input} with $M=2$, citing
\cite[Theorem~1.4(a)]{DHFree}. A bound for $N\ge N_0$ would suffice:
the finitely many remaining sizes can be absorbed into $M$ because
$|R_{12}|\le1$. For their limiting measure,
$S_*=V_{a(\chi)}$ by~\cite[Proposition~1.6 and Definition~1.7]{DHOverlap}.
In particular,
\[
 N^{2/3}\E_\nu\av{R_{12}^2}\longrightarrow\E_\chi V_{a(\chi)}.
\]
Their Corollary~1.2 identifies this constant for Gaussian disorder and
relates it to Talagrand's Conjecture~11.7.5. The result here transfers
that conclusion, and the susceptibility law, to the bounded symmetric
coupling class.

\end{document}